\documentclass[11pt,twoside]{article}
\usepackage{psfrag}
\usepackage{epsfig}
\usepackage{graphicx}
\usepackage{amsmath}
\usepackage{amssymb}
\usepackage{amsthm}
\usepackage{subfigure}
\usepackage{cite}
\usepackage[latin1]{inputenc}

\newtheorem{lemma}{Lemma}
\newtheorem{theorem}{Theorem}

\textwidth 132mm \textheight 19cm \evensidemargin 0cm \oddsidemargin
0cm

\pagenumbering{arabic}

\begin{document}

\noindent
\textbf{\LARGE Local distributions for eigenfunctions and \\
for perfect colorings of q-ary hypercube
\footnote{This research is partially supported 
by the Russian Foundation for Basic Research under the grant no. 13-01-00463}
}
\date{}

\vspace*{5mm} \noindent
\textsc{Anastasia Vasil'eva} \hfill \texttt{vasilan@math.nsc.ru} \\
{\small Sobolev Institute of Mathematics,  \\
Novosibirsk State University, Novosibirsk, RUSSIA} \\

\medskip

\begin{center}
\parbox{11,8cm}{\footnotesize
\textbf{Abstract.} Under study are the eigenfunctions
and perfect colorings of the graph of $n$-dimensional $q$-ary
Hamming space. We obtain the
interdependence of local distributions of an eigenfunction in two
orthogonal faces. We prove also an analogous result for perfect
colorings. }
\end{center}

\baselineskip=0.9\normalbaselineskip

\section{Introduction}

We study the eigenfunctions and perfect colorings of $n$-dimensional $q$-ary hypercube.
The particular case of perfect colorings, which is extensively investigated now,
corresponds to the completely regular codes.
The aim of the paper is to provide a connection
between the local distributions in two orthogonal faces.
Earlier this question was considered in \cite{Vas99,Vas2009,H2012,Vas2013}
for the 1-error correcting perfect codes and perfect colorings in binary case ($q=2$).
In case $q>2$ the question is investigated in \cite{CHK} for the 1-error-correcting codes.
In \cite{Kro} a more general case of the direct product of graphs is studied;
however, the formula is not extended for the classes of graphs.

The paper is organised as follows:
In Section 2 we give some necessary notations and propositions.
In Section 3 we establish a formula for local weight enumerators
of an eigenfunction in a pair of orthogonal faces.
Using this, we obtain in Section 4 the formula for local weight enumerators
of a perfect coloring in a pair of orthogonal faces.
Both derived formulas are symmetric under choice of the face from the pair.

The results in the paper were published in part at the Seventh International 
Workshop on Optimal Codes and Related Topics, 
September, 2013 \cite{VasOC}.

\section{Preliminaries}

Consider the set ${\bf F}_q=\{0,1,\ldots,q-1\}$ as the group modulo $q$ and
${\bf F}_q^n$ as the abelian group ${\bf F}_q\times\ldots\times {\bf F}_q$.
We investigate functions and the colorings on the graph of ${\bf F}_q^n$
of \emph{$q$-ary $n$-dimensional hypercube}; in this graph two vertices are adjacent if
they differ in exactly one position.

Let $\alpha\in{\bf F}_q^n$ be an arbitrary vertex. Here and elsewhere $I$ denotes a subset of $\{1,\ldots, n\}$ and $\overline{I}=\{1,\ldots,n\}\backslash I$.
We denote the \emph{support} of the vertex $\alpha$  by $s(\alpha)$ (i.e., the set of its nonzero positions); the cardinality of the support is the \emph{Hamming weight} of $\alpha$ and is denoted by $wt(\alpha)$;
the \emph{Hamming distance} between two vertices $\alpha$ and $\beta$ that equals
the Hamming weight of $\alpha-\beta$ is denoted by $\rho(\alpha,\beta)$.
We write $W_i(\alpha)$ for the \emph{sphere} of radius $i$ centered at the vertex $\alpha$
(i.e., the set of all vertices with distance $i$ from $\alpha$) and we write
$B_i(\alpha)$  for the \emph{ball} of radius $i$ centered at the vertex $\alpha$ (i.e., the set of all vertices with distance at most $i$ from $\alpha$).
By definition, put
$$\Gamma_I(\alpha)=\{\beta\in{\bf F}_q^n \ : \ \beta_i=\alpha_i \ \forall \ i\notin I\} ,$$
then $\Gamma_I(\alpha)$ is an \emph{$|I|$-dimensional face},
it has the structure of ${\bf F}_q^{|I|}$. We write simply $W_i$ and $\Gamma_I$ instead of $W_i(\alpha)$ and $\Gamma_I(\alpha)$ in the case of the all-zero vertex $\alpha$.
Two faces $\Gamma_I(\alpha)$ and $\Gamma_J(\beta)$ are \emph{orthogonal} if $J=\overline{I}$.
Obviously, two orthogonal faces have exactly one common vertex.
Given $\alpha,\beta\in {\bf F}_q^n$,
we denote $\langle\alpha,\beta\rangle=\alpha_1\beta_1+\ldots+\alpha_n\beta_n\mod q .$

Let us consider the set of all functions $f: {\bf F}_q^n\longrightarrow \mathbb{C}$
as $q^n$-dimensional vector space $V$ over the complex field $\mathbb{C}$.
Let $\xi=e^{2\pi \sqrt{-1}/q}$.
For $\beta\in {\bf F}_q^n$, the~function $\varphi^{\beta}\in V$, where
$$\varphi^\beta(\alpha) = \xi^{\langle\alpha,\beta\rangle}, \ \ \ \ \alpha\in {\bf F}_q^n,$$
is called the \emph{character}.
All characters $\varphi^\beta, \ \beta\in {\bf F}_q^n$,
form the orthogonal basis of the vector space $V$ with respect
to the \emph{inner product} $\langle \ , \ \rangle$ defined as follows:
$$\langle f,g\rangle= \sum_{\beta\in {\bf F}_q^n} f(\beta) \overline{g(\beta)} .$$
The \emph{Fourier transform} $\widehat{f}$ of the function $f$ is defined as the inner product with the characters:
\begin{equation}\label{Ftran}
\widehat{f}(\alpha)=\langle f,\varphi^\alpha \rangle=
\sum_{\beta\in {\bf F}_q^n} f(\beta) \overline{\xi^{\langle\alpha,\beta\rangle}},
 \ \ \ \alpha\in {\bf F}_q^n.
\end{equation}
The initial function $f$ can be presented in the basis of the characters:
\begin{equation}\label{Ftran-inv}
f(\alpha)=q^{-n}\sum_{\beta\in {\bf F}_q^n} \widehat{f}(\beta) \xi^{\langle\alpha,\beta\rangle}, \ \ \ \alpha\in {\bf F}_q^n.
\end{equation}

\begin{lemma} \label{Lvec}
Let $\beta\in{\bf {\bf F}}_q^n$ and $I\subseteq\{1,\ldots,n\}$. Then
$$\sum_{\alpha\in\Gamma_I} \varphi^{\beta}(\alpha) x^{|I|-|s(\alpha)|} y^{|s(\alpha)|} =
 (x-y)^{|I\bigcap s(\beta)|}
(x+(q-1)y)^{|I|-|I\bigcap s(\beta)|} .$$
\end{lemma}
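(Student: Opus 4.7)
The plan is to exploit the product structure of the character $\varphi^\beta$ together with the product structure of the face $\Gamma_I$, so that the sum factorizes into one-coordinate factors.

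First I would note that a vertex $\alpha\in\Gamma_I$ is specified by an arbitrary choice of $\alpha_i\in{\bf F}_q$ for $i\in I$, while $\alpha_i=0$ for $i\notin I$. Consequently
$$\langle\alpha,\beta\rangle=\sum_{i\in I}\alpha_i\beta_i\pmod q,\qquad |s(\alpha)|=\sum_{i\in I}[\alpha_i\neq 0],$$
so the weight monomial factors as $x^{|I|-|s(\alpha)|}y^{|s(\alpha)|}=\prod_{i\in I}x^{[\alpha_i=0]}y^{[\alpha_i\neq 0]}$ and the character factors as $\varphi^\beta(\alpha)=\prod_{i\in I}\xi^{\alpha_i\beta_i}$. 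Hence the whole sum collapses into a product over coordinates:
$$\sum_{\alpha\in\Gamma_I}\varphi^\beta(\alpha)x^{|I|-|s(\alpha)|}y^{|s(\alpha)|}=\prod_{i\in I}\Bigl(\sum_{a\in{\bf F}_q}\xi^{a\beta_i}\,x^{[a=0]}y^{[a\neq 0]}\Bigr).$$

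Then I would evaluate each one-coordinate factor by cases. If $\beta_i=0$, the sum is $x+(q-1)y$. If $\beta_i\neq 0$, then $\sum_{a\in{\bf F}_q}\xi^{a\beta_i}=0$, so $\sum_{a\neq 0}\xi^{a\beta_i}=-1$, whence the factor equals $x-y$. The number of indices $i\in I$ with $\beta_i\neq 0$ is $|I\cap s(\beta)|$, so the product comes out to $(x-y)^{|I\cap s(\beta)|}(x+(q-1)y)^{|I|-|I\cap s(\beta)|}$, which is the desired right-hand side.

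There is no real obstacle here: the only point requiring a bit of care is the vanishing of the nontrivial character sum on the cyclic group ${\bf F}_q$, which is what distinguishes the $\beta_i\neq 0$ case from the $\beta_i=0$ case. Everything else is bookkeeping about the product structure of $\Gamma_I$.
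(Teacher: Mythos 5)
Your proposal is correct and follows essentially the same route as the paper: both factor the sum over the face $\Gamma_I$ into a product of one-coordinate sums and then evaluate each factor by the two cases $\beta_i=0$ and $\beta_i\neq 0$ using the vanishing of the nontrivial character sum $\sum_{a=0}^{q-1}\xi^{ab}=0$ for $b\neq 0$. No gaps.
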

\begin{proof}
Let $|I|=k$. Without loss of generality assume that $I=\{1,\ldots,n\}$.
By definition of the characters,
$$\sum_{\alpha\in\Gamma_I} \varphi^{\beta}(\alpha) x^{|I|-|s(\alpha)|} y^{|s(\alpha)|} =
\sum_{\alpha_1=0}^{q-1} \ldots \sum_{\alpha_k=0}^{q-1}
\prod_{i=1}^k \xi^{\alpha_i \beta_i} x^{1-|s(\alpha_i)|}y^{|s(\alpha_i)|} .$$
(For $a\in{\bf F}_q$ it holds $|s(a)|=0$ if $a=0$ and $|s(a)|=1$ if $a\neq 0$.)
Then we change the order of summations and  multiplication:
\begin{equation}\label{inL1}
\prod_{i=1}^k \sum_{\alpha_i=0}^{q-1} \xi^{\alpha_i \beta_i} x^{1-|s(\alpha_i)|}y^{|s(\alpha_i)|} .
\end{equation}
Owing to the properties of the  primitive root of unity, we have
$$\sum_{a=0}^{q-1} \xi^{a b}=
\left\{\begin{array}{ll} 0 , & b\neq 0 , \\
q, & b=0 \end{array} , \right.$$
and therefore
$$\sum_{a=0}^{q-1} \xi^{a b} x^{1-|s(a)|} y^{|s(a)|}=
\left\{\begin{array}{ll} x-y , & b\neq 0 ,
\\ x+(q-1)y, & b=0 . \end{array} \right.$$
Applying this to (\ref{inL1}), we finally obtain
$$(1-t)^{|I\bigcap s(\beta)|}
(1+(q-1)t)^{|I|-|I\bigcap s(\beta)|} .$$
\end{proof}

Now we introduce the concept of a local distribution. By definition, put
$$v^{I,f}_j (\alpha)= \sum_{\beta\in\Gamma_I(\alpha)\bigcap W_j(\alpha)} f(\beta) ,$$
the vector $v^{I,f}(\alpha)=(v^{I,f}_0(\alpha),\ldots,v^{I,f}_{|I|}(\alpha))$
is called the \emph{local distribution of the function $f$
in the face $\Gamma_I(\alpha)$ with respect to the vertex $\alpha$}
or shortly the \emph{$(I,\alpha)$-local distribution} of $f$.  We say that the polynomial
$$g^{I,\alpha}_f(x,y) = \sum_{j=0}^{|I|} v^{I,f}_j(\alpha) y^j x^{|I|-j} =
\sum_{\beta\in\Gamma_I(\alpha)} f(\beta) y^{|s(\beta-\alpha)|}
x^{|I|-|s(\beta-\alpha)|}$$
is a \emph{local weight enumerator of the function $f$
in the face $\Gamma_I(\alpha)$ with respect to the vertex $\alpha$}
or shortly the \emph{$(I,\alpha)$-local weight enumerator} of $f$.
We omit $\alpha$ (in all notations) if $\alpha=(0,\ldots,0)$. 

Let us describe the local weight enumerator of an arbitrary function
in terms of its Fourier coefficients:
\begin{lemma} \label{gI}
Let $f$ be an arbitrary function. Then
\begin{equation} \label{enum_Four}
g^I_f(x,y) =  q^{-n}\sum_{\beta\in {\bf F}_q^n}\widehat{f}(\beta)
(x+(q-1)y)^{|I|-|I\bigcap s(\beta)|}(x-y)^{|I\bigcap s(\beta)|} .
\end{equation}
\end{lemma}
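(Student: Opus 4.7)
The plan is to combine the inverse Fourier expansion (\ref{Ftran-inv}) with Lemma \ref{Lvec}. Everything else is bookkeeping: there are no convergence issues (all sums are finite) and no genuine combinatorial obstacle, since the hard combinatorial computation is exactly what Lemma \ref{Lvec} already did.

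First I would start from the second (explicit) form of the local weight enumerator,
$$g^I_f(x,y) = \sum_{\alpha\in\Gamma_I} f(\alpha)\, x^{|I|-|s(\alpha)|} y^{|s(\alpha)|},$$
and substitute for $f(\alpha)$ the Fourier inversion formula (\ref{Ftran-inv}), rewritten in character form as $f(\alpha)=q^{-n}\sum_{\beta\in\mathbf{F}_q^n}\widehat f(\beta)\,\varphi^\beta(\alpha)$. Since both sums are finite I can exchange their order, obtaining
$$g^I_f(x,y) = q^{-n}\sum_{\beta\in\mathbf{F}_q^n}\widehat f(\beta)\sum_{\alpha\in\Gamma_I}\varphi^\beta(\alpha)\, x^{|I|-|s(\alpha)|} y^{|s(\alpha)|}.$$
The inner sum is precisely the quantity evaluated in Lemma \ref{Lvec}, and plugging in its value $(x-y)^{|I\cap s(\beta)|}(x+(q-1)y)^{|I|-|I\cap s(\beta)|}$ yields (\ref{enum_Four}).

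The only point that requires a bit of care is to make sure the character conventions match: (\ref{Ftran}) defines $\widehat f(\alpha)=\sum_\beta f(\beta)\overline{\xi^{\langle\alpha,\beta\rangle}}$, so the companion inversion (\ref{Ftran-inv}) indeed gives $f(\alpha)=q^{-n}\sum_\beta \widehat f(\beta)\,\xi^{\langle\alpha,\beta\rangle}=q^{-n}\sum_\beta \widehat f(\beta)\,\varphi^\beta(\alpha)$, which is the form used above. With that consistency noted, the proof reduces to the two-line substitution-and-swap described, and the main (and only) ingredient is Lemma \ref{Lvec}.
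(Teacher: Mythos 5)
Your proposal is correct and follows exactly the paper's own argument: substitute the Fourier inversion formula (\ref{Ftran-inv}) into the explicit form of $g^I_f$, exchange the two finite sums, and evaluate the inner sum by Lemma \ref{Lvec}. The extra remark about matching character conventions is a sensible sanity check but does not change the route.
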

\begin{proof} By Lemma \ref{Lvec},
$$g^I_f(x,y) =
\sum_{\beta\in\Gamma_I} f(\beta) y^{|s(\beta)|} x^{|I|-|s(\beta)|} $$
$$= q^{-n}\sum_{\delta\in {\bf F}_q^n}\widehat{f}(\delta)
\sum_{\beta\in\Gamma_I} \xi^{\langle \beta,\delta \rangle} x^{|I|-|s(\beta)|} y^{|s(\beta)|} .$$
Then we can apply Lemma \ref{Lvec} and obtain (\ref{enum_Four}).
\end{proof}

\section{Eigenfunctions}
The first object of our consideration is the set of all eigenfunctions
of the $n$-dimensional $q$-ary hypercube ${\bf {\bf F}}_q^n$.
As usual, we refer to as the \emph{eigenvalue of a graph}
the eigenvalue of its adjacency matrix.
It is known that the eigenvalues $\lambda$ of the
graph of $n$-dimensional $q$-ary hypercube are equal to
$$\lambda_h=(q-1)n-qh, \ \ \ h=0,1,\ldots,n,$$
here $h$ is called the \emph{number of the eigenvalue} $\lambda_h$.
Obviously, an eigenvalue $\lambda$ has the number $h=h(\lambda)= \frac{(q-1)n-\lambda}{q}$.
The corresponding eigenfunctions (we call them \emph{$\lambda$-functions}) satisfy the equations
\begin{equation} \label{ball}  \sum_{\beta\in W_1(\alpha)} f(\beta) = \lambda_h f(\alpha), \ \ \ \alpha\in{\bf F}_q^n,
\end{equation}
or in the matrix form:
$$Df=\lambda_h f,$$
where $D$ is the adjacency matrix of ${\bf F}_q^n$
and $f$ is a vector of the function $f$ values.
It is easy to see that the Fourier coefficients $\widehat{f}(\alpha)$
of a $\lambda$-function $f$ equal zero apart from the case, where
the Hamming weight of $\alpha$ is equal to the number of $\lambda$.

We are going to derive the interdependence between the local weight enumerators
for an eigenfunction in two orthogonal faces.
\begin{theorem} \label{q-eig_loc}
Let $\lambda$ be an eigenvalue of ${\bf F}_q^n$ with the number $h=\frac{(q-1)n-\lambda}{q}$, \
let $f$ be an arbitrary $\lambda$-function, and let $\alpha\in{\bf F}_q^n$. Then
$$(x+(q-1)y)^{h-|\overline{I}|} g^{\overline{I},\alpha}_f(x,y) = (x'+(q-1)y')^{h-|I|} g^{I,\alpha}_f(x',y') ,$$
where $x'=x+(q-2)y, \ y'=-y $.
\end{theorem}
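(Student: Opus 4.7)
The plan is to reduce the theorem to the case $\alpha = 0$ via translation and then pass to the Fourier side, where everything is controlled by Lemma \ref{gI}. Since the adjacency of the hypercube is translation-invariant, the shifted function $f_\alpha(\beta) := f(\beta+\alpha)$ is again a $\lambda$-function, and the definition of the local weight enumerator gives $g^{I,\alpha}_f(x,y) = g^I_{f_\alpha}(x,y)$ for every $I$. Hence it suffices to prove the identity for the all-zero vertex.

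In this reduced setting I would apply Lemma \ref{gI}, together with the observation (made just before the theorem statement) that $\widehat{f}(\beta)=0$ unless $wt(\beta)=h$. Writing $a(\beta):=|I\cap s(\beta)|$, so that $|\overline{I}\cap s(\beta)|=h-a(\beta)$ for every $\beta$ contributing to the sum, Lemma \ref{gI} yields
\begin{equation*}
g^I_f(x,y) = q^{-n}\sum_{\substack{\beta\in{\bf F}_q^n\\ wt(\beta)=h}} \widehat{f}(\beta)\,(x+(q-1)y)^{|I|-a(\beta)}(x-y)^{a(\beta)},
\end{equation*}
and the analogous formula for $g^{\overline{I}}_f$ with $a(\beta)$ replaced by $h-a(\beta)$ and $|I|$ by $|\overline{I}|$.

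The heart of the argument is the elementary identity that under the substitution $x'=x+(q-2)y$, $y'=-y$ one has
\begin{equation*}
x' + (q-1)y' = x - y, \qquad x' - y' = x + (q-1)y,
\end{equation*}
so the substitution \emph{swaps} the two factors appearing in Lemma \ref{gI}. After the swap, the $\beta$-summand in $g^I_f(x',y')$ becomes $(x-y)^{|I|-a(\beta)}(x+(q-1)y)^{a(\beta)}$; multiplying by the prefactor $(x'+(q-1)y')^{h-|I|}=(x-y)^{h-|I|}$ raises the $(x-y)$-exponent to $h-a(\beta)$. On the left-hand side, the prefactor $(x+(q-1)y)^{h-|\overline{I}|}$ combines with $(x+(q-1)y)^{|\overline{I}|-(h-a(\beta))}$ to give the $(x+(q-1)y)$-exponent $a(\beta)$, while the second factor contributes $(x-y)^{h-a(\beta)}$. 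Both sides therefore reduce to
\begin{equation*}
q^{-n}\sum_{\substack{\beta\in{\bf F}_q^n\\ wt(\beta)=h}} \widehat{f}(\beta)\,(x+(q-1)y)^{a(\beta)}(x-y)^{h-a(\beta)},
\end{equation*}
proving the identity.

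The one genuinely nontrivial step I expect is guessing (or verifying) the particular substitution $(x',y')=(x+(q-2)y,-y)$ that exchanges the basic factors of Lemma \ref{gI}; once this is noticed, the matching of exponents is automatic bookkeeping driven by the eigenvalue-enforced identity $|I\cap s(\beta)|+|\overline{I}\cap s(\beta)|=wt(\beta)=h$. The asymmetric prefactors $(x+(q-1)y)^{h-|\overline{I}|}$ and $(x'+(q-1)y')^{h-|I|}$ are precisely what is needed to absorb the fact that the exponents $|I|-a(\beta)$ and $|\overline{I}|-(h-a(\beta))$ of the exchanged factors need not agree for a single $\beta$.
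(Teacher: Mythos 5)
Your proposal is correct and follows essentially the same route as the paper: reduce to $\alpha=0$, express both local weight enumerators via Lemma \ref{gI}, restrict the sum to $W_h$ using the support of $\widehat{f}$, and exploit the substitution $x'=x+(q-2)y$, $y'=-y$ that swaps the factors $x-y$ and $x+(q-1)y$. The only cosmetic difference is that you symmetrize both sides to a common expression, whereas the paper transforms $g^{\overline{I}}_f(x,y)$ directly into a multiple of $g^I_f(x',y')$.
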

\begin{proof}
The faces $\Gamma_I(\alpha)$ and $\Gamma_{\overline{I}}(\alpha)$ are orthogonal.
Without loss of generality assume that $\alpha$ is the all-zero vertex.
Using Lemma \ref{gI}, we can express the $(\overline{I},{\bf 0})$-local weight enumerator
of the $\lambda$-function $f$ in terms of
the Fourier coefficients:
$$g_f^{\overline{I}}(x,y) = q^{-n}\sum_{\beta\in {\bf F}_q^n}\widehat{f}(\beta)
(x+(q-1)y)^{n-|I|-|s(\beta)|+|I\bigcap
s(\beta)|}(x-y)^{|s(\beta)|-|I\bigcap s(\beta)|} .$$
Since
$ \widehat{f}(\beta)=0$ for every $\beta\notin W_h,$
the summation can be taken over all vertices
of weight $h$ instead of all vertices of ${\bf F}_q^n$. This implies
$$g^{\overline{I}}_f(x,y) =
q^{-n}(x+(q-1)y)^{n-|I|-h}(x-y)^{h-|I|} \phantom{(x-y)^{|I|-|I\bigcap s(\beta)|} }$$
$$\phantom{(x-y)^{h-|I|}}\times \sum_{\beta\in W_h}\widehat{f}(\beta)
(x+(q-1)y)^{|I\bigcap s(\beta)|}(x-y)^{|I|-|I\bigcap s(\beta)|} .$$
We choose new variables $x'$ and $y'$ such that
$$\left\{ \begin{array}{rcl}
    x'+(q-1)y' & = & x-y, \\
    x'-y' & = & x+(q-1)y,
  \end{array} \right.   \ \ \ \mbox{or} \ \ \ \ \ \ \
\left\{ \begin{array}{rcl}
    x'&=&x+(q-2)y, \\
    y'&=&-y .
  \end{array} \right.   $$
Hence,
$$g^{\overline{I}}_f(x,y) =
q^{-n}(x+(q-1)y)^{n-|I|-h}(x-y)^{h-|I|} \phantom{(x-y)^{|I|-|I\bigcap s(\beta)|}} $$
$$\phantom{(x-y)^{h-|I|}}\times\sum_{\beta\in W_h}\widehat{f}(\beta)
(x'-y')^{|I\bigcap s(\beta)|}(x'+(q-1)y')^{|I|-|I\bigcap s(\beta)|} .$$
Comparing with Lemma \ref{gI}, we finally have
$$g^{\overline{I}}_f(x,y) = (x+(q-1)y)^{n-|I|-h}(x'+(q-1)y')^{h-|I|} g^I_f(x',y') .$$
\end{proof}

\section{Perfect colorings}
In this section we prove an analog of Theorem \ref{q-eig_loc} for perfect colorings.

The partition
$C=(C_1,\ldots,C_r)$ of ${\bf F}_q^n$ is called a \emph{perfect $r$-coloring}
(or an \emph{equitable partition}, or a \emph{partition design})
with the \emph{parameter matrix} $S=(s_{ij})_{i,j=1,\ldots,r}$ \
if for every $i,j\in\{1,\ldots,r\}$ and each vertex $\alpha\in C_i$
the number of vertices $\beta\in C_j$ at distance 1 from $\alpha$ is equal to $s_{ij}$.
Present a perfect $r$-coloring by $(0,1)$-matrix $C$ of size $q^n\times r$
with the rows corresponding to the vertices of ${\bf F}_q^n$ and
the columns corresponding to the colors $\{1,\ldots, r\}$.
The matrix $C$ is defined as follows:
each row has only one nonzero position that marks the color of the corresponding vertex.
In these terms the coloring is perfect if
\begin{equation}\label{DCCS}
DC=CS ,
\end{equation}
where $D$ is the adjacency matrix of the hypercube ${\bf F}_q^n$.

We define a local distribution of a coloring as a local distribution of characteristic functions of the colors. More precisely, a \emph{local distribution of the coloring $C$ in the face $\Gamma_I(\alpha)$ with respect to the vertex} $\alpha$ (or \emph{$(I,\alpha)$-local distribution})
is the $r\times (|I|+1)$-matrix
$$v^{I,C}(\alpha) = \left(\begin{array}{ccc}
                      v_0^{I,C_1}(\alpha) & \ldots & v_{|I|}^{I,C_1}(\alpha) \\
                      \vdots &  & \vdots \\
                      v_0^{I,C_r}(\alpha) & \ldots & v_{|I|}^{I,C_r}(\alpha)
                    \end{array}\right) ,$$
where $v_j^{I,C_i}(\alpha)= |C_i\bigcap W_j(\alpha)\bigcap\Gamma_I(\alpha)| , \ \ i=1,\ldots,r$,
and $j=0,\ldots,|I|$.
Let $g^{I,\alpha}_{C_i}(x,y), \ i=1\ldots,r,$ be the $(I,\alpha)$-local weight enumerator
of the $i$th color $C_i$; i.e.,
$$g^{I,\alpha}_{C_i}(x,y)  = \sum_{j=0}^{|I|} v^{I,C_i}_j(\alpha) y^j x^{|I|-j} .$$
The vector-function
$$g^{I,\alpha}_C(x,y) = (g^{I,\alpha}_{C_1}(x,y),\ldots,g^{I,\alpha}_{C_r}(x,y))$$
is called the \emph{local weight enumerator of the coloring $C$ in the face $\Gamma_I(\alpha)$ with respect to the vertex} $\alpha$ (or the \emph{$(I,\alpha)$-local weight enumerator}).

The next theorem is an analog of Theorem \ref{q-eig_loc} for perfect colorings.
\begin{theorem} \label{q-col_loc}
Let $C=(C_1,\ldots,C_r)$ be an arbitrary perfect coloring
of ${\bf F}_q^n$ with parameter matrix $S$ and $\alpha\in{\bf F}_q^n$.
Put $h(S)=\frac{(q-1)nE-S}{q}$, \ where $E$ is an identity matrix.
Then
\begin{equation}\label{Th-col}
g^{\overline{I},\alpha}_C(x,y) (x+(q-1)y)^{h(S)-|\overline{I}|E} =
g^{I,\alpha}_C(x',y') (x'+(q-1)y')^{h(S)-|I|E} .
\end{equation}
\end{theorem}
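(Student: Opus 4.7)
The plan is to reduce Theorem \ref{q-col_loc} to Theorem \ref{q-eig_loc} by evaluating the proposed matrix identity on an eigenbasis of~$S$. For any right eigenvector $v\in\mathbb{C}^r$ of $S$ with eigenvalue $\lambda=\lambda_h$ (so $h=h(\lambda)$), the scalar function $Cv:{\bf F}_q^n\to\mathbb{C}$, $(Cv)(\beta)=\sum_i v_i[\beta\in C_i]$, satisfies $D(Cv)=(DC)v=CSv=\lambda(Cv)$ by (\ref{DCCS}) and is therefore a $\lambda$-function of the hypercube. Since the local weight enumerator is linear in the function, one also has $g^{I,\alpha}_{Cv}(x,y)=g^{I,\alpha}_C(x,y)\,v$ (row of polynomials times column vector).

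Applying Theorem \ref{q-eig_loc} to the eigenfunction $Cv$ then gives
\[
(x+(q-1)y)^{h-|\overline{I}|}\,g^{\overline{I},\alpha}_C(x,y)\,v = (x'+(q-1)y')^{h-|I|}\,g^{I,\alpha}_C(x',y')\,v.
\]
Because $h(S)v=h(\lambda)v=hv$, the matrix factor $(x+(q-1)y)^{h(S)-|\overline{I}|E}$ acts on $v$ as the scalar $(x+(q-1)y)^{h-|\overline{I}|}$, and analogously for the primed factor; so the displayed identity is precisely (\ref{Th-col}) read off along the column~$v$. To promote this to the full matrix identity, I would invoke diagonalizability of~$S$: the column span of $C$ is a $D$-invariant subspace of $\mathbb{C}^{q^n}$; the restriction of the real symmetric matrix $D$ to this subspace is diagonalizable and, in the (linearly independent) basis of columns of $C$, is represented by $S$. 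Hence $S$ admits an eigenbasis $\{v_1,\ldots,v_r\}$, and the family of scalar identities assembles into (\ref{Th-col}).

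The main obstacle is giving a clean meaning to the matrix quantity $(x+(q-1)y)^{h(S)-|\overline{I}|E}$: one must specify, say via the spectral resolution of $h(S)$ or via a matrix-valued Taylor expansion, that it acts as the claimed scalar on each eigenvector of~$S$. Once this convention is fixed and diagonalizability of~$S$ is in hand, everything reduces formally to the scalar case already established in Theorem \ref{q-eig_loc}; no new combinatorial content enters. As a sanity check, the change of variables $x'=x+(q-2)y$, $y'=-y$ is the same involution that swaps $(x+(q-1)y,\,x-y)\leftrightarrow(x'-y',\,x'+(q-1)y')$, so the resulting identity is automatically symmetric under $I\leftrightarrow\overline{I}$, matching the symmetry claim in the introduction.
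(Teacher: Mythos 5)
Your proposal is correct and follows essentially the same route as the paper: the paper likewise diagonalizes $S$ (via $ST=TM$, $F=CT$), applies Theorem \ref{q-eig_loc} to each eigenfunction column, and defines the matrix power by $(x+(q-1)y)^{h(S)-|I|E}=TM_I(x,y)T^{-1}$, which is exactly your spectral convention. Your remark justifying diagonalizability of $S$ through the restriction of the symmetric matrix $D$ to the column span of $C$ is a point the paper's proof takes for granted, so it is a welcome addition rather than a divergence.
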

\begin{proof}
Without loss of generality assume that $\alpha=(0,\ldots,0)$.

Perfect colorings are closely related with eigenfunctions of the hypercube. Indeed, let
$\mu_1, \ldots,\mu_r$ be the all eigenvalues (not necessarily distinct)
of the parameter matrix $S$ and let $T^1,\ldots,T^r$ be the linearly independent
eigenvectors of~$S$ that corresponds to the eigenvalues; i.e.,
$$ST^i = \mu_iT^i, \ \ i=1,\ldots,r.$$
Thus, for the matrices $T=[T^1,\ldots,T^r]$ and
$M=diag\{\mu_1,\ldots,\mu_r\}$ it holds
$$ST=T M .$$
Multiplying both sides of (\ref{DCCS}) by $T$ and applying the last equation,
we have for the matrix
\begin{equation}\label{FCT}
    F=CT
\end{equation}
that
$$DF=DCT=CST = CTM=FM .$$
It means that the columns $F^1,\ldots,F^r$ of $F$
are the eigenfunctions of $D$ or $\lambda$-functions; i.e.,
$$DF^i=\mu_i F^i , \ \ \ i=1\ldots,r.$$
Applying Theorem \ref{q-eig_loc} to these $\lambda$-functions, we have
\begin{equation}\label{col-1}
(x+(q-1)y)^{h_i-|\overline{I}|} g^{\overline{I}}_{F^i}(x,y) = (x'+(q-1)y')^{h_i-|I|} g^I_{F^i}(x',y'), \ \ i=1,\ldots,r,
\end{equation}
where for $i=1\ldots,r$ the value $h_i$ is equal to the number
of the eigenvalue $\mu_i$ of the hypercube ${\bf F}_q^n$; i.e.,
$h_i= \frac{(q-1)n-\mu_i}{q}$.
Put $g_F=(g_{F^1},\ldots,g_{F^r}) $ and
$$M_I(x,y)=diag\left\{(x+(q-1)y)^{h_1-|I|}, \ldots,(x+(q-1)y)^{h_r-|I|}\right\} .$$
So we can rewrite the equations (\ref{col-1}) in terms of these matrices:
$$g_F^{\overline{I}}(x,y)M_{\overline{I}}(x,y)= g_F^I(x',y')M_I(x',y') .$$
It follows from (\ref{FCT}) that
$$g_F = (g_{F^1},\ldots,g_{F^r}) =
(g_{C^1},\ldots,g_{C^r}) T = g_C T.$$
Therefore, we obtain
\begin{equation}\label{gTM}
g_C^{\overline{I}}(x,y)T M_{\overline{I}}(x,y)= g_C^I(x',y')T M_I(x',y').
\end{equation}
Then we multiply both sides of (\ref{gTM}) by $T^{-1}$ and recall
the definition of a matrix function:
$$(x+(q-1)y)^{\frac{(q-1)nE-S}{q}-|I|E} = T M_I(x,y) T^{-1},$$
which gives (\ref{Th-col}) and concludes the proof.
\end{proof}

\end{document}